\newtheorem{thm}{Theorem}[section]
\newtheorem{cor}[thm]{Corollary}
\newtheorem{lemma}[thm]{Lemma}
\theoremstyle{definition}
\theoremstyle{remark}
\numberwithin{equation}{section}
\def\bes{\begin{equation*}}
\def\be{\begin{equation}}
\def\ee{\end{equation}}
\def\ees{\end{equation*}}
\begin{document}

\title[Asymptotic expansions of exponentials of digamma function]{Asymptotic expansions of exponentials of digamma function and identity for Bernoulli polynomials}

\author{Neven Elezovi\'c}

\address{Neven Elezovi\'c, Faculty of Electrical Engineering and Computing, University of
Zagreb, Unska 3, 10000 Zagreb, Croatia}
\email{neven.elez\@fer.hr}

\begin{abstract}
	The asymptotic expansion of digamma function is a starting point for the derivation of approximants for harmonic sums or Euler-Mascheroni constant. It is usual to derive such approximations as values of logarithmic function, which leads to the expansion of the exponentials of digamma function. In this paper
	the asymptotic expansion of the function $\exp(p\psi(x+t))$ is derived and analyzed in details, especially for integer values of parameter $p$. The behavior for integer values of $p$ is proved and as a consequence a new identity for Bernoulli polynomials. The obtained formulas are used to improve know inequalities for Euler's constant and harmonic numbers.
\end{abstract}

\subjclass{33B15}

\keywords{asymptotic expansion; digamma function; Bernoulli polynomials}

\thanks{Date of submission: \today}

\maketitle

\section{Introduction}

        Let $\gamma$ denote Euler's constant, $H_n$ the harmonic number --- the partial sum of harmonic series:
        \bes
                H_n=1+\frac12+\dots+\frac1n,
        \ees
        and $\psi$ digamma (psi) function.

        Harmonic numbers, logarithm, Euler's constant and digamma function are connected through well known relations, the main one is
        \be
                \psi(n+1)=H_n-\gamma.
        \label{psiHn}
        \ee
        Various approximations of digamma function are used in this relation and interpreted as approximation for the sequence $(H_n)$ or $\gamma$.
        In the recent paper \cite{ChMo-2012-1} the constants $a$, $b$, $c$, $d$ in the expression
        \be
                w_n=H_n-\ln\biggl(n+a+\frac bx+\frac c{n^2}+\frac d{n^3}\biggr)
        \ee
        are calculated such that this sequence is a good approximation to $\gamma$. 
        The authors obtained four term expansion 
        \bes
                e^{\psi(x+1)}=x+\frac12+\frac1{24x}-\frac1{48x^2}+\frac{23}{5760x^3}+O(x^{-4})
        \ees
        which can be written conventionally as
        \be
                \psi(x+1)=\log\biggl(x+\frac{1}{2}+\frac{1}{24x}
                -\frac{1}{48x^2}+\frac{23}{5760x^3}+O(x^{-4})\biggr).
        \ee
        The calculation was tedious with no clue how one can obtain general term. In fact, a question about generalization of this procedure is posed.

        This question is answered in the paper \cite{Yang-2012-1}, but the algorithm proposed there is also complicated. It relies on the connection between logarithmic function and Bell polynomials, but the algorithm for calculation of this polynomials is not easy. 

        In this paper we shall explain a simple and useful algorithm which covers more general case of approximations of this type. We will not restrict ourself to finding a numerical sequence. Instead, there is a natural choice of at least one parameter which can better explain obtained expansions and related inequalities.
        So, we shall consider the expansion of the function $e^{p\psi(x+t)}$, where $p$ and $t$ are arbitrary real numbers. The case when $p$ is a natural number is especially important and curious, and it will be explained in details.

        Replacing argument $x+1$ by $x+t$ leads to the better understanding of the involving expansions. The magnificent formula
        \bes
                \psi(x+t)\sim\log x+\sum_{n=0}^\infty (-1)^n\frac{B_n(t)}nx^{-n}
        \ees
        is a good example of the role of introducing a parameter in such expansions.

        The following lemma about functional transformations of asymptotic series has his origin in Euler's work. 
	 See e.g.\ \cite{G} for the explanation in the case of Taylor series, and \cite{CEV} for its use in the case of asymptotic series.

\begin{lemma}
        \label{potl}
        Let $s$ be a real number, $a_0=1$ and $g(x)$ a function with asymptotic expansion 
        \bes
                g(x)\sim \sum_{n=0}^{\infty}a_nx^{-n}.
        \ees
        Then for all real $p$ it holds
        \bes
                g(x)^{p}\sim \sum_{n=0}^{\infty}b_n(p)x^{-n},
        \ees
        where   
        \be
        \begin{aligned}
                b_0(p)&=1,\\
                b_n(p)&=\frac{1}{n}\sum_{k=1}^{n}[k(1+p)-n]a_k b_{n-k}(p).
        \end{aligned}
        \label{pot}
        \ee
\end{lemma}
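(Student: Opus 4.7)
The plan is to derive the recursion by combining the elementary differential identity $g \cdot h' = p \cdot g' \cdot h$, where $h := g^p$, with termwise manipulation of the asymptotic expansions. Since $a_0 = 1$, we have $g(x) \to 1$ as $x \to \infty$, so $h(x) = g(x)^p$ is well-defined for large $x$. Existence of an asymptotic expansion $h(x) \sim \sum_{n \ge 0} b_n(p) x^{-n}$ with $b_0(p) = 1$ follows from substituting $u = g(x) - 1 = O(x^{-1})$ into the Taylor series $(1+u)^p = \sum_{k \ge 0} \binom{p}{k} u^k$ and collecting like powers of $x^{-1}$, which is a standard composition property of asymptotic power series.

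Next, differentiating termwise (again permissible for asymptotic power series) yields
\[
g'(x) \sim -\sum_{n \ge 1} n\, a_n\, x^{-n-1}, \qquad h'(x) \sim -\sum_{n \ge 1} n\, b_n(p)\, x^{-n-1}.
\]
Substituting into $g h' = p h g'$ and reading off the coefficient of $x^{-N-1}$ via the Cauchy product turns the identity into
\[
\sum_{k=1}^N k\, b_k(p)\, a_{N-k} \;=\; p \sum_{k=1}^N k\, a_k\, b_{N-k}(p).
\]

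The $k=N$ term on the left equals $N b_N(p)$ because $a_0 = 1$, and re-indexing the remainder via $k \mapsto N-k$ rewrites $\sum_{k=1}^{N-1} k\, b_k(p)\, a_{N-k}$ as $\sum_{k=1}^{N-1}(N-k)\, a_k\, b_{N-k}(p)$, which extends harmlessly to $k=N$ since the added term carries the factor $N-k=0$. Solving for $b_N(p)$ produces
\[
b_N(p) = \frac{1}{N}\sum_{k=1}^N \bigl[k(1+p) - N\bigr] a_k\, b_{N-k}(p),
\]
which is the recursion \eqref{pot}. The only genuinely analytic content is the existence of an asymptotic expansion for $g^p$ and the legitimacy of termwise differentiation; both are standard facts about asymptotic power series, so the real work is confined to the index bookkeeping in the Cauchy product.
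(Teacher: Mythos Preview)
The paper does not actually prove this lemma; it merely states it and cites \cite{G} and \cite{CEV}, remarking that the argument ``has its origin in Euler's work.'' Your proof is correct and is precisely the classical Euler-type derivation: differentiate the identity $h=g^p$ to obtain $g\,h'=p\,g'\,h$, multiply the series out, and equate coefficients. The index bookkeeping you carry out is accurate and yields \eqref{pot} exactly.

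One small caveat worth flagging: termwise differentiation of a general asymptotic expansion is \emph{not} automatic---it requires additional regularity hypotheses (e.g., analyticity in a sector, or that the derivative also admits an asymptotic expansion of the expected form). You correctly note this as the ``only genuinely analytic content,'' but calling it a ``standard fact about asymptotic power series'' slightly overstates the generality; in the paper's applications (digamma and gamma functions) the needed regularity certainly holds, so this is not a substantive gap here. Alternatively, one can sidestep the issue entirely by treating the recursion as a purely formal identity between power series, which is really all the paper needs.
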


\section{The main asymptotic approximations}

        The following theorem is proved in \cite{CEV}:
        
\begin{thm}
        The following asymptotic expansion is valid as $x\to\infty$:
        \be
                \psi(x+t)\sim \log\biggl(\sum_{n=0}^\infty S_n(t)x^{-n+1}\biggr)
        \label{Sn}
        \ee
        where $S_0=1$ and
        \be
                S_n(t)=\frac1n\sum_{k=1}^n (-1)^{k+1}B_k(t)
                S_{n-k}(t),\qquad n\ge1.
        \ee
\end{thm}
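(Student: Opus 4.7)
The plan is to start from the classical asymptotic expansion of the digamma function shifted by $t$,
$$\psi(x+t) - \log x \sim \sum_{n=1}^\infty \frac{(-1)^{n+1} B_n(t)}{n\, x^n},$$
which is the standard generalization of Stirling's expansion (it reduces to the usual series for $\psi(x)$ when $t=0$, using $B_n(0)=B_n$). Since the right-hand side is $O(1/x)$, exponentiating it produces a formal power series in $1/x$ with constant term $1$, so the function
$$F(x) := \frac{e^{\psi(x+t)}}{x} = \exp\bigl(\psi(x+t)-\log x\bigr)$$
admits an asymptotic expansion of the form $F(x) \sim \sum_{n=0}^\infty S_n(t)\, x^{-n}$ with $S_0(t)=1$. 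Once this is established, the theorem is equivalent to showing that these coefficients $S_n(t)$ satisfy the stated recurrence.

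To identify the $S_n(t)$, I would avoid direct manipulation of the exponential series and instead use logarithmic differentiation. From $\log F(x) = \psi(x+t) - \log x$ we obtain
$$\frac{F'(x)}{F(x)} = \psi'(x+t) - \frac{1}{x}.$$
Differentiating the asymptotic expansion of $\psi(x+t)$ term by term (justified because it is an expansion in negative powers of $x$) gives
$$\psi'(x+t) - \frac{1}{x} \sim \sum_{n=1}^\infty (-1)^n B_n(t)\, x^{-n-1}.$$
Inserting $F(x) \sim \sum_{m=0}^\infty S_m(t)\, x^{-m}$ and $F'(x) \sim -\sum_{m=1}^\infty m\, S_m(t)\, x^{-m-1}$ into $F'(x) = F(x)\bigl(\psi'(x+t)-1/x\bigr)$, and reading off the coefficient of $x^{-n-1}$ on both sides, yields after the substitution $k=n-m$ the identity
$$-n\, S_n(t) = \sum_{k=1}^n (-1)^k B_k(t)\, S_{n-k}(t),$$
which is exactly the recurrence claimed in the theorem.

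The main obstacle is essentially bookkeeping rather than conceptual: one must verify that termwise differentiation and multiplication of asymptotic series in $1/x$ is legitimate (which is classical, and in any case the paper cites \cite{CEV} for the general framework), and keep the convolution indices straight. The setup is in fact a clean instance of the same philosophy as Lemma \ref{potl}: whenever a function has an asymptotic series starting at $1$, an operation such as $g\mapsto g^p$ or $h\mapsto e^h$ can be encoded by a first-order linear identity that forces the coefficients to satisfy a triangular recurrence. I would therefore expect no serious difficulty beyond careful index tracking.
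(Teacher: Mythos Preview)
Your argument is correct: starting from the Bernoulli-polynomial form of the digamma expansion, writing $F(x)=e^{\psi(x+t)}/x$, and equating coefficients in $F'(x)=F(x)\bigl(\psi'(x+t)-1/x\bigr)$ indeed produces
\[
-n\,S_n(t)=\sum_{k=1}^n(-1)^kB_k(t)\,S_{n-k}(t),
\]
which is the stated recursion. The index bookkeeping and the termwise differentiation are both fine.

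One remark on the comparison you were asked to make: the paper does not actually prove this theorem --- it is quoted from \cite{CEV} and used as input. So there is no ``paper's own proof'' to match against. That said, your method fits naturally into the paper's toolbox: it is exactly the logarithmic-differentiation trick underlying Lemma~\ref{potl} (where $g'/g=p\cdot f'/f$ forces the triangular recursion for the coefficients of $g=f^p$), specialized here to $g=e^h$ with $g'/g=h'$. In fact your computation is the $p=1$ case of the recursion the paper later obtains for $G_n$ in \eqref{GnBk}; the paper reaches that more general identity by a different route, passing through the Wallis quotient $\Gamma(x+t)/\Gamma(x+s)$ and letting $s\to t$, rather than by direct differentiation. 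Your approach is more elementary and self-contained for this particular statement; the paper's limit argument has the advantage of handling all real $p$ at once and of tying the result to the known Wallis-function expansions from \cite{BuEl-2011-1,BuEl-2012-1}.
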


        The first few coefficients are:
        \begin{align*}
                S_0&=1,\\
                S_1&=t-\tfrac12,\\
                S_2&=\tfrac1{24},\\
                S_3&=-\tfrac1{24}t+\tfrac1{48},\\
                S_4&= \tfrac1{24}t^2-\tfrac1{24}t+\tfrac{23}{5760},\\
                S_5&=-\tfrac1{24}t^3+\tfrac1{16}t^2-\tfrac{23}{1920}t-\tfrac{17}{3840},\\
                S_6&=\tfrac1{24}t^4-\tfrac1{12}t^3+\tfrac{23}{960}t^2+\tfrac{17}{960}-\tfrac{10099}{2903040}.
        \end{align*}

        The collaps of the degree in the polynomial $S_2$ is explained and various relations about polynomials $(S_n)$ are proved.
        This theorem will be extended in our main theorem which covers more complex situation.

        From this list of the coefficients one can write the following two expansions, for $t=0$
        \be
                \psi(x)\sim\log\biggl(x+\frac{1}{2}+\frac{1}{24x}
                -\frac{1}{48x^2}+\frac{23}{5760x^3}-\frac{17}{3840x^4}
                -\frac{10099}{2903040x^5}+\dots\biggr).
        \label{t=0}
        \ee
        and for $t=\frac12$
        \be
                \psi(x+\tfrac12)\sim\log\biggl(x+\frac1{24x}-\frac{37}{5760x^3}
                +\frac{10313}{2903040x^5}-\frac{5509121}{1393459200x^7}+\dots\biggr)
        \ee
	These expansions are used in the paper \cite{BaCh-2012-1} as approximants for harmonic sums, but the calculations of coefficients are made by other more tedious methods which cannot be improved to obtain general term of these sequences.

        From \eqref{t=0} one can obtain the following expansions
        \bes
                e^{2\psi(x)}\sim x^2-x+\frac13-\frac1{90x^2}-\frac1{90x^3}
                -\frac1{567x^4}+\frac{43}{5670x^5}+\dots
        \ees
        and
        \bes
                e^{4\psi(x)}\sim x^4-2x^3+\frac{5x^2}3-\frac{2x}3+\frac4{455}+\frac{32}{2835x^2}+\frac{32}{2835x^3}+\dots
        \ees
        The fact that in both expansions the term with $x^{-1}$ is missing is not a coincidence. This is true for all even $p$, but this is by no means obvious. We shall prove this in the next section.

        Let us first derive asymptotic expansion of the function
        $e^{p\psi(x+t)}$ where $p$ is a real number. This expansion
        can be written in the form
        \bes
                p\psi(x+t)\sim\log\biggl[x^p\biggl(\sum_{n=0}^\infty S_n(t)x^{-n}\biggr)^p\biggr].
        \ees
        Therefore, it is enough to apply algorithm given in Lemma~\ref{potl} to the calculated polynomials $(S_n)$
        to obtain the following:

\begin{thm}
        Let $p$ be a real number.
        Function $e^{p\psi(x+t)}$ has the following asymptotic expansion
        \be
                e^{p\psi(x+t)}
                \sim x^p\sum_{n=0}^\infty G_n(p,t)x^{-n}
        \label{Gn-exp}
	\ee
        where $(G_n)$ are defined by $G_0=1$ and
        \be
                G_n:=\frac1n\sum_{k=1}^n[k(1+p)-n]S_n(t)G_{n-k}.
        \label{GnLn}
        \ee
\end{thm}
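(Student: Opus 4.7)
The plan is to combine the previous theorem (which represents $\psi(x+t)$ as a logarithm of a power series in $1/x$) with Lemma~\ref{potl} (which handles the $p$-th power of an asymptotic series). No new analytic machinery is needed; the result is essentially a two-step formal manipulation already anticipated by the paragraph immediately preceding the theorem.

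First I would exponentiate both sides of \eqref{Sn} to get
\bes
e^{\psi(x+t)} \sim \sum_{n=0}^\infty S_n(t)\,x^{-n+1} = x\sum_{n=0}^\infty S_n(t)\,x^{-n}.
\ees
Because the bracketed series has constant term $S_0(t)=1$, it is bounded away from zero for large $x$, so raising it to any real power $p$ is a legitimate operation on asymptotic series and produces
\bes
e^{p\psi(x+t)} \sim x^p\biggl(\sum_{n=0}^\infty S_n(t)\,x^{-n}\biggr)^p.
\ees

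Second I would apply Lemma~\ref{potl} directly to the inner series with $a_n=S_n(t)$. The lemma delivers coefficients $b_n(p)$, and setting $G_n(p,t):=b_n(p)$ immediately yields $G_0=1$ together with the stated recurrence \eqref{GnLn}, read with $S_k(t)$ inside the sum so as to match the summation index $k$ appearing in \eqref{pot}. Pulling the $x^p$ outside then reproduces \eqref{Gn-exp}.

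The real content has therefore been absorbed into Lemma~\ref{potl} and the previous theorem; there is no genuinely hard step in the present proof, only the small bookkeeping check that \eqref{pot} transcribes correctly with $a_k \mapsto S_k(t)$ and $b_{n-k}(p) \mapsto G_{n-k}(p,t)$. The only point that warrants explicit mention is the justification for raising an asymptotic expansion with unit leading coefficient to an arbitrary real power, and this is precisely the conclusion of Lemma~\ref{potl}. Thus I expect the proof to amount to the two displayed lines above, followed by a single invocation of the lemma.
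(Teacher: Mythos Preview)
Your proposal is correct and matches the paper's own argument essentially line for line: the paper also derives the theorem by writing $p\psi(x+t)\sim\log\bigl[x^p(\sum_{n\ge0}S_n(t)x^{-n})^p\bigr]$ from the previous theorem and then invoking Lemma~\ref{potl} with $a_k=S_k(t)$. Your observation that the recurrence \eqref{GnLn} should carry $S_k(t)$ rather than $S_n(t)$ inside the sum is also right; this is a typo in the statement.
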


	Let us denote in the sequell
	\be
		G(p,t,x)=x^p\sum_{n=0}^\infty G_n(p,t)x^{-n}.
	\ee

        The polynomials $(G_n)$ are easy to calculate using any CAS as functions of both $t$ and $p$, but it is not easy to write it down. Polynomial $S_n$ is of degree $n-2$ for $n\ge 2$, but 
        polynomials $t\mapsto G_n(t,p)$ and $p\mapsto G_n(t,p)$ will be, in general, of degree $n$ --- this is obvious from recurrent relation \eqref{GnLn}.
        
        Here are the first few polynomials $(G_n)$:
        \begin{align*}
                G_0&=1,\\
                G_1&=\tfrac12p(1-2t),\\
                G_2&=\tfrac1{24}p(-2+3p+12t-12pt-12t^2+12pt^2),\\
                G_3&=\tfrac1{48}(-2+p)p(-1+2t)(p+4t-4pt-t^2+4pt^2),\\
                G_4&=\tfrac1{5760}p(15p^3-60p^2+20p+48)-\tfrac1{48}
                (p^2(p-2)(p-3)t\\
                &\qquad+\frac1{48}p(p-2)(p-3)(3p-2)t^2
                -\tfrac1{12}p(p-1)(p-2)(p-3)t^3\\
                &\qquad
                +\tfrac1{24}p(p-1)(p-2)(p-3)t^4.
        \end{align*}

        In order to highlight some interesting details, we will choose some concrete value of one of these parameters.
        For example, $p=2$ and $p=3$ gives the following sequence:
        \begin{alignat*}{2}
                (p=2)&&(p=3)&\\
                G_0&=1,         
                &G_0&=1,
                \\
                G_1&=2t-1,
                &G_1&=3t-\frac32,
                \\
                G_2&=t^2-t+\frac13,
                &G_2&=3t^2-3t+\frac78,
                \\
                G_3&=0,
                &G_3&=\frac1{16}(2t-1)(8t^2-8t+2)
                \\
                G_4&=-\frac1{90},
                &G_4&=-\frac9{640},
                \\
                G_5&=\frac{2t-1}{90},
                &G_5&=\frac{9(2t-1)}{1280},
                \\
                G_6&=-\frac{t^2}{30}+\frac t{30}-\frac1{567},
                \qquad
                &G_6&=\frac{201 t^2}{35840}+\frac{9t}{640}=\frac{9}{640}
        \end{alignat*}
        One can see the collapse of the polynomial $G_{p+1}$. The similar thing can be noticed for each positive integer $p$. For example, if $p=4$ then $G_5=0$.

        First, we shall deduce another, more natural recursion for polynomials $(G_n)$. It is based on the following result which is proved in \cite{BuEl-2011-1} and \cite[Theorem 3.1.]{BuEl-2012-1}.

        Let $F(x,s,t)$ be Wallis function, defined by
        \bes
                F(x,s,t)=\frac{\Gamma(x+t)}{\Gamma(x+s)}
        \ees
        Then, it holds
        \bes
                F(x,s,t)\sim x^{t-s}\left[\sum_{n=0}^\infty P_n(t,s)x^{-n}\right]^{\tfrac1m}
        \ees
        where $P_0=1$ and $(P_n)$ is defined by
        \bes
                P_n(t,s)=\frac mn\sum_{k=1}^n
                (-1)^{k+1}
                \frac{B_{k+1}(t)-B_{k+1}(s)}{k+1}P_{n-k}(t,s).
        \ees
	Now, let us choose $m=p/(t-s)$.
        Taking the limit $s\to t$, we obtain:
        \bes
                \lim_{s\to t}F(x,s,t)^{p/(t-s)}=e^{p\psi(x+t)}
        \ees
        From the other side, using property of Bernoulli polynomials,
        \bes
                p\lim_{s\to t}\frac{B_{k+1}(t)-B_{k+1}(s)}{(k+1)(t-s)}
                =p\frac{B'_{k+1}(t)}{k+1}=p B_k(t),
        \ees
        we obtain the following result.

\begin{thm}
        For any real $p\ne0$ function $e^{p\psi(x+t)}$ has the asymptotic expansion \eqref{Gn-exp} where $G_0=1$ and
        \be
                G_n=\frac{p}{n}\sum_{k=1}^n(-1)^{k+1}B_k(t)G_{n-k}.
        \label{GnBk}
	\ee
\end{thm}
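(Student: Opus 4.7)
The plan is to derive the claimed recursion as a limiting case of the asymptotic expansion of the Wallis function $F(x,s,t)=\Gamma(x+t)/\Gamma(x+s)$ cited just above the statement. Setting the free parameter $m=p/(t-s)$ and raising both sides of that expansion to the $m$-th power gives, for $s\ne t$,
\[
F(x,s,t)^{p/(t-s)} \sim x^p \sum_{n=0}^\infty P_n(t,s)\, x^{-n}.
\]
I would then let $s\to t$. On the left-hand side, writing $F(x,s,t)^{p/(t-s)}=\exp\bigl(p[\log\Gamma(x+t)-\log\Gamma(x+s)]/(t-s)\bigr)$, the limit is $e^{p\psi(x+t)}$ by the definition of the derivative of $\log\Gamma$. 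On the right-hand side, setting $G_n:=\lim_{s\to t}P_n(t,s)$ recovers the expansion \eqref{Gn-exp}.

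Next I would deduce the recursion from this limit. Substituting $m=p/(t-s)$ into the recursion for $P_n$ displayed just before the theorem gives
\[
P_n(t,s) = \frac{p}{n}\sum_{k=1}^n (-1)^{k+1}\frac{B_{k+1}(t)-B_{k+1}(s)}{(k+1)(t-s)}\, P_{n-k}(t,s),
\]
so every coefficient is a divided difference in $s$. As $s\to t$ the quotient $[B_{k+1}(t)-B_{k+1}(s)]/(t-s)$ tends to $B'_{k+1}(t)=(k+1)B_k(t)$, the identity for Bernoulli polynomials already recorded in the text. By induction on $n$, starting from $G_0=1$, the limits $G_n$ exist and satisfy
\[
G_n = \frac{p}{n}\sum_{k=1}^n (-1)^{k+1} B_k(t)\, G_{n-k},
\]
which is exactly \eqref{GnBk}.

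The delicate point is justifying that the term-by-term passage $s\to t$ is compatible with the asymptotic expansion. Since each coefficient $P_n(t,s)$ extends continuously—indeed analytically—to $s=t$ by the divided-difference argument above, and since an asymptotic expansion is determined by its truncations, it suffices to check that each finite truncation of the Wallis expansion holds with a remainder whose implied constant depends continuously on $s$ in a neighborhood of $t$. One can then pass $s\to t$ in each truncated identity and equate coefficients to conclude. This uniformity near $s=t$ is the main (and only non-algebraic) obstacle; the substitution $m=p/(t-s)$ and the evaluation of the divided differences are routine, and the consistency of the resulting $(G_n)$ with those produced by Lemma~\ref{potl} is automatic since both recursions compute the same coefficients in the asymptotic expansion of $e^{p\psi(x+t)}$.
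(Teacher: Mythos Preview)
Your proof is essentially the paper's own argument: choose $m=p/(t-s)$ in the Wallis expansion, let $s\to t$ so that the left side becomes $e^{p\psi(x+t)}$, and pass to the limit in the $P_n$-recursion using $B'_{k+1}(t)=(k+1)B_k(t)$ to obtain \eqref{GnBk}. Your added remarks on the uniformity of the remainder go slightly beyond what the paper writes, but the strategy and all key steps coincide.
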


%%%%%%%%%%%%%%%%%%%%%%%%%%%%%%%%%%%%%%%%%%%%%%%%%%%%%%%%%%%%
%%%%%%%%%%%%%%%%%%%%%%%%%%%%%%%%%%%%%%%%%%%%%%%%%%%%%%%%%%%%
%%%%%%%%%%%%%%%%%%%%%%%%%%%%%%%%%%%%%%%%%%%%%%%%%%%%%%%%%%%%
\section{Integer values of $p$} 

        Now we can prove the main results.

\begin{thm}
        Polynomials $(G_n)$ satisfy the following identity
        \be
                G_n(p,s+t)=\sum_{k=0}^n \binom{p-n+k}{k}
                G_{n-k}(p,s)t^{k}.
        \label{t+s}
	\ee

	1) If $p$ is not positive integer, then $G_n(p,t)$ is polynomial of degree $n$ in both variables.

        Let us suppose that $p$ is positive integer. Then

        2) For $n\le p$, $t\mapsto G_n$ is polynomial of degree $n$.

        3) For $n\ge p+1$, $t\mapsto G_n$ is polynomial of degree $\le n-p-1$.
\end{thm}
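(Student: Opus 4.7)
The plan is to prove the identity (\ref{t+s}) first and then extract the three degree claims as direct consequences. The identity is forced by the trivial translation $\psi(x+s+t) = \psi((x+t)+s)$, which provides two asymptotic expansions of the single function $e^{p\psi(x+s+t)}$ as $x\to\infty$.

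Applying (\ref{Gn-exp}) directly with parameter $s+t$ gives
\[
e^{p\psi(x+s+t)} \sim x^p \sum_{n=0}^\infty G_n(p,s+t)\,x^{-n},
\]
while treating the argument as $(x+t)+s$ and regarding $x+t$ as the expansion variable with parameter $s$ yields
\[
e^{p\psi(x+s+t)} \sim (x+t)^p \sum_{n=0}^\infty G_n(p,s)\,(x+t)^{-n} = \sum_{n=0}^\infty G_n(p,s)(x+t)^{p-n}.
\]
Expanding each $(x+t)^{p-n}$ by Newton's generalized binomial theorem in powers of $x^{-1}$, substituting $m=n+k$, and collecting the coefficient of $x^{p-m}$ on the right-hand side produces $\sum_{k=0}^m \binom{p-m+k}{k} G_{m-k}(p,s)\,t^k$. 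By uniqueness of the asymptotic expansion in the scale $\{x^{-n}\}$, this must equal $G_m(p,s+t)$, which is exactly (\ref{t+s}).

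Specializing to $s=0$ in (\ref{t+s}) yields the explicit polynomial representation
\[
G_n(p,t) = \sum_{k=0}^n \binom{p-n+k}{k}\,G_{n-k}(p,0)\,t^k,
\]
and the degree statements reduce to identifying when the binomial factor $\binom{p-n+k}{k} = (p-n+k)(p-n+k-1)\cdots(p-n+1)/k!$ vanishes, which happens iff $p$ is an integer with $n-k\le p\le n-1$. For part 1), when $p$ is not a positive integer the coefficient of $t^n$ equals $\binom{p}{n}\ne 0$, so $G_n$ has degree $n$ in $t$; combined with the upper bound (degree at most $n$ in $p$) obtained by an easy induction on the recursion (\ref{GnBk}) and the observation that $\binom{p}{n}$ is itself a polynomial of degree $n$ in $p$, this gives degree $n$ in both variables. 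Part 2) is the same observation: for positive integer $p\ge n$, $\binom{p}{n}\ne0$ again forces the degree in $t$ to equal $n$. For part 3), any positive integer $p$ with $1\le p\le n-1$ satisfies $n-k\le p\le n-1$ for every $k\in\{n-p,n-p+1,\ldots,n\}$, so every coefficient of $t^k$ with $k\ge n-p$ vanishes, yielding degree at most $n-p-1$ in $t$.

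The only delicate step is the re-expansion of $(x+t)^{p-n}$ and the subsequent comparison of coefficients: this is formal algebra in the indeterminate $x^{-1}$ together with the standard uniqueness of asymptotic expansions in a fixed scale, so no genuine obstacle arises once the two asymptotic representations are written down.
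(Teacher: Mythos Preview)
Your proof is correct and follows essentially the same route as the paper: both obtain \eqref{t+s} by expanding $e^{p\psi(x+s+t)}$ once with base point $x$ and once with base point $x+t$, re-expand $(x+t)^{p-n}$ via the generalized binomial series, and then read off the degree statements from the specialization $s=0$ by tracking when $\binom{p-n+k}{k}$ vanishes. Your treatment is slightly more explicit about the degree in the variable $p$ (via induction on the recursion \eqref{GnBk}), but otherwise the arguments coincide.
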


\begin{proof}
	Let $p$, $t$ and $s$ be arbitrary real numbers.
	The expansion for the function $G(p,s+t,x)$ can be written in two different ways:
	\begin{align*}
		G(p,s+t,x)&=\sum_{n=0}^\infty G_n(p,s+t)x^{-n+p}\\
		&=\sum_{n=0}^\infty G_n(p,s)(x+t)^{-n+p}\\
		&=\sum_{n=0}^\infty G_n(p,s)\sum_{k=0}^{\infty}
		\binom{-n+p}{k}t^kx^{-n+p-k}\\
		&=\sum_{n=0}^\infty
		\biggl[\sum_{k=0}^n
		\binom{p-n+k}kG_{n-k}(p,s)t^k
		\biggr]x^{-n+p}
	\end{align*}
	which proves \eqref{t+s}.

	The assertions 1) and 2) are obvious from this explicit formula.
	Let us prove 3). Suppose $n\ge p+1$. Then, from \eqref{t+s} 
	we can write
	\bes
                G_n(p,t)=\sum_{k=0}^n (-1)^k \binom{n-p+1}{k}
                G_{n-k}(p,0)t^{k}.
	\ees

	If $n=p+1$, the sum is reduced to the first term:
	\bes
		G_{p+1}(p,t)=G_{p+1}(p,0),
	\ees
	so, polynomial $G_{p+1}(p,t)$ is of degree $0$, or it is equal to zero identically.

	For $n>p+1$,
	\bes
                G_n(p,t)=\sum_{k=0}^{n-p-1}(-1)^k \binom{n-p+1}{k}
                G_{n-k}(p,0)t^{k}
	\ees
	and this is polynomial of degree at most $n-p-1$. This degree can be reduced if $G_{p+1}(p,0)=0$.
\end{proof}

	It will be shown that for each even $p$, it holds $G_{p+1}(p,0)=0$, so, the asymptotic expansion of $e^{p\psi(x)}$ does not contain the member with power $x^{-1}$. This requires another treatment. 

\begin{thm}
	\label{evenp}
        If $p$ is even natural number, then $G_{p+1}\equiv0$ and $t\mapsto G_{p+2+k}$ is polynomial of degree $k$. 
\end{thm}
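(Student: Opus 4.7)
From the previous theorem, $G_{p+1}(p,t)$ is constant in $t$ (equal to $G_{p+1}(p,0)$), and for $n\ge p+2$, $G_n(p,t)$ is a polynomial in $t$ of degree at most $n-p-1$ whose coefficient of $t^{n-p-1}$ is $(-1)^{n-p-1}G_{p+1}(p,0)$. Consequently both assertions of the theorem --- the vanishing $G_{p+1}\equiv 0$ and the degree bound on $G_{p+2+\ell}$ --- follow at once once we establish $G_{p+1}(p,0)=0$ for even positive integers $p$. Write $g_n(p):=G_n(p,0)$. The plan is to combine two identities for $(g_n(p))$ coming from very different structural features of $e^{p\psi(x)}$.

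The first identity comes from the translation $\psi(x+1)=\psi(x)+1/x$, which gives $e^{p\psi(x+1)}=e^{p/x}e^{p\psi(x)}$. Multiplying the asymptotic expansions and matching the coefficient of $x^{p-n}$ yields
\[G_n(p,1)=\sum_{k=0}^n\frac{p^k}{k!}\,g_{n-k}(p).\]
Specialising to $n=p+1$ and invoking $G_{p+1}(p,1)=G_{p+1}(p,0)=g_{p+1}(p)$ from the previous theorem, the $k=0$ term cancels and we obtain
\[\sum_{k=1}^{p+1}\frac{p^k}{k!}\,g_{p+1-k}(p)=0 \qquad\text{(identity A)}.\]

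The second identity exploits parity: since $B_{2j+1}=0$ for $j\ge 1$, the series $p\psi(x)-p\ln x+p/(2x)$ contains only even powers of $1/x$. Writing $V(p,x):=e^{p\psi(x)}/x^p$, this factorises as $V(p,x)=e^{-p/(2x)}U(p,x)$, where $U(p,x)$ is a formal series in $1/x^2$. Invariance of $U$ under $x\mapsto -x$ then gives the formal identity $V(p,-x)=e^{p/x}V(p,x)$ of power series in $1/x$. Equating coefficients of $x^{-N}$ yields $(-1)^N g_N(p)=\sum_{k=0}^N\frac{p^k}{k!}g_{N-k}(p)$, which for odd $N$ rearranges to
\[g_N(p)=-\tfrac12\sum_{k=1}^N\frac{p^k}{k!}\,g_{N-k}(p) \qquad\text{(identity B)}.\]

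To finish: for even $p$, $N=p+1$ is odd, so identity B at $N=p+1$ combined with identity A gives $g_{p+1}(p)=-\tfrac12\cdot 0=0$. The step I expect to be the crux --- and the only non-mechanical move --- is noticing that identities A and B, derived respectively from a functional equation in $x$ and from the parity structure of the asymptotic expansion of $\psi$, produce matching right-hand sides precisely at the index $N=p+1$ when $p$ is even. Everything else is routine formal power-series bookkeeping in $1/x$.
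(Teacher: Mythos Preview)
Your proof is correct. The overall architecture matches the paper's: both reduce the theorem to showing $G_{p+1}(p,0)=0$, using the previous theorem to handle the degree statements, and both ultimately exploit the relation $G_n(p,1)=(-1)^nG_n(p,0)$ at the odd index $n=p+1$.

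The difference lies in how that parity relation is obtained. The paper proves the lemma $G_n(p,1)=(-1)^nG_n(p,0)$ directly by induction on the recursion $G_n=\frac{p}{n}\sum_{k=1}^n(-1)^{k+1}B_k(t)G_{n-k}$, using the reflection $B_k(1)=(-1)^kB_k(0)$. You instead derive two generating-function identities: identity~A from the difference equation $\psi(x+1)=\psi(x)+1/x$, and identity~B from the fact that $\psi(x)-\log x+1/(2x)$ is formally even in $1/x$. Observe that your A and B together say precisely $G_n(p,1)=(-1)^nG_n(p,0)$, so the two arguments converge at the same intermediate result. Your route is more conceptual---it makes the analytic source of the parity (the vanishing of odd Bernoulli numbers in the asymptotic expansion of $\psi$) visible at the level of generating functions and avoids the induction---while the paper's route stays entirely within the recursive machinery already in place and needs no additional functional input about $\psi$.
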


\begin{proof}
	Let us treat $p$ as real valued variable. Then
	the polynomial $p\mapsto G_n(p,t)$ is of degree $n$. Let us denote its coefficients by:
        \be
                G_n(p,t)=\sum_{k=0}^n G_{n,k}(p)t^n.
        \label{Gn}
        \ee
        Since for the function $G(p,t,x)=e^{p\psi(x+t)}$ it holds
        $
                \dfrac{\partial G}{\partial x}=
                \dfrac{\partial G}{\partial t},
        $
        we have
        \bes
        \sum_{n=1}^\infty\frac{\partial G_n}{\partial t}
        x^{-n+p}=\sum_{n=1}^\infty G_{n-1}(-n+p+1)x^{-n+p},
        \ees
        therefore
        \bes
                \frac{\partial G_n(p,t)}{\partial t}
                =(p+1-n)G_{n-1}(p,t).
        \ees
        Now, from the \eqref{Gn} it follows
        \bes
                G_{n,k}=\frac{p+1-n}{k}G_{n-1,k-1}.
        \ees
        Therefore,
        \be
                G_{n,k}=\binom{p-n+k}k G_{n-k,0}.
        \label{Gnk}
	\ee

        Let us arrange coefficients in the following table:
        \bes
        \begin{array}{cccccccc}
        G_{0,0}\\
        G_{1,0}&G_{1,1}\\
        G_{2,0}&G_{2,1}&G_{2,2}\\
        G_{3,0}&G_{3,1}&G_{3,2}&G_{3,3}\\
        \vdots\\
        G_{n-1,0}&G_{n-1,1}&G_{n-1,2}&\cdots&G_{n-1,n-1}\\
        G_{n,0}&G_{n,1}&G_{n,2}&\cdots&G_{n,n-1}&G_{n,n}\\
        \end{array}
        \ees
        If some of the coefficients $G_{n,k}$ in this table are equal to zero for some particular value of $p$, then the same is true for all coefficients $G_{n+1,k+1}$, $G_{n+2,k+2}$,\dots.

	To finish proof of the theorem, we should prove that 
        \bes
                G_{p+1,k}=0,\qquad \text{for all }k
        \ees
        if $p$ is an even number. 
	
	First, note that for $k\ge1$ we read from \eqref{Gnk} that $G_{n,k}$ is divisible by $(p-n+1)\cdots
	(p-n+k)$. Therefore, for each integer value od $p$, all coefficients $G_{p+1,k}$, $k\ge1$ are equal to zero. Then, of course, $G_{p+1}(p,t)\equiv G_{p+1}(p,0)$, but, also, the degree of all subsequent polynomials are reduced by $p$.

	To end this proof, we need the following

\begin{lemma}
        It holds
        \be
                G_n(p,1)=(-1)^n G_n(p,0).
        \ee
\end{lemma}

        In other words,
        \bes
                \sum_{k=0}^n G_{n,k}(p,0)=(-1)^nG_{n,0}(p,0)
        \ees
        From here, if $p$ is an even number
        \bes
                G_{p+1,0}=-\frac12\sum_{k=1}^n G_{p+1,k}(p+1,0)=0.
        \ees
	and the theorem is proved.

        Finally, let us prove previous lemma using induction. We have, using \eqref{GnBk}
        \begin{gather*}
                G_0(p,1)=1=G_0(p,0),\\
                G_1(p,1)=B_1(1)G_0(p,1)=-B_1(0)G_0(p,0)=-G_1(p,0).
        \end{gather*}
        Let us suppose that
        \bes
                G_k(p,1)=(-1)^k G_k(p,0)
        \ees
        is satisfied for all $k=0,1,\dots,n-1$. Then
        \begin{align*}
                G_n(p,1)&=\frac1n\sum_{k=1}^n(-1)^{k+1}B_k(1)G_{n-k}(p,1)\\
                &=\frac1n\sum_{k=1}^n(-1)^{k+1}(-1)^kB_k(0)
                (-1)^{n-k}G_k(p,0)\\
                &=(-1)^n\frac1n\sum_{k=1}^n(-1)^{k+1}B_k(0)G_{n-k}(p,0)\\
                &=(-1)^n G_n(p,0).
        \end{align*}
        This proves the lemma.
\end{proof}

%%%%%%%%%%%%%%%%%%%%%%%%%%%%%%%%%%%%%%%%%%%%%%%%%%%%%%%%%
%%%%%%%%%%%%%%%%%%%%%%%%%%%%%%%%%%%%%%%%%%%%%%%%%%%%%%%%%
%%%%%%%%%%%%%%%%%%%%%%%%%%%%%%%%%%%%%%%%%%%%%%%%%%%%%%%%%

\section{Fixed values of $t$}

        Interesting value is $t=1$, wherefrom one obtains
        \begin{align*}
                G_0&=1,\\
                G_1&=\frac p2,\\
                G_2&=\frac1{24}p(3p-2),\\
                G_3&=\frac1{48}p^2(p-2),\\
                G_4&=\frac1{5760}p(15p^3-60p^2+20p+48),\\
                G_5&=\frac1{11520}p^2(p-4)(3p^2-8p-12).
        \end{align*}
        For $t=\frac12$ this sequence is reduced to even members, i.e. $G_{2n+1}=0$ and:
        \begin{align*}
                G_0&=1,\\
                G_2&=\frac p{24},\\
                G_4&=\frac{p(5p-42)}{5760}),\\
                G_6&=\frac{p(35p^2-882p+11160}{2903040}.
        \end{align*}
        Here $p\mapsto G_{2n}(p,t)$ is a polynomial of degree only $p$, which shows that the interplay between $p$ and $t$ is not so obvious.

\begin{thm}
        For $t=\frac12$, polynomials $G_{2n+1}$ vanishes, 
	$G_{2n}$ has the degree $n$ and can be calculated from
	\be
	G_{2n}=-\frac{p}{2n}\sum_{k=1}^n(1-2^{-2k})B_{2k}G_{2n-2k}.
	\label{t=1/2}
	\ee
\end{thm}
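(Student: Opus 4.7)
The plan is to specialize the recursion \eqref{GnBk} to $t=\tfrac12$ and to exploit the classical identities
\[
B_1(\tfrac12)=0,\qquad B_{2k+1}=0\text{ for }k\ge 1,\qquad B_{2k}(\tfrac12)=(2^{1-2k}-1)B_{2k}.
\]
The first two together imply $B_j(\tfrac12)=0$ for every odd $j\ge 1$, so in
\[
G_n(p,\tfrac12)=\frac{p}{n}\sum_{k=1}^n(-1)^{k+1}B_k(\tfrac12)\,G_{n-k}(p,\tfrac12)
\]
only summands with \emph{even} $k$ survive. The whole theorem then follows from this parity reduction together with two short inductions.

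First I would prove $G_{2n+1}(p,\tfrac12)\equiv 0$ by strong induction on $n$. The base case $G_1(p,\tfrac12)=0$ is already visible in the explicit list. For the inductive step, every nonzero term in the recursion for $G_{2n+1}$ has $k=2j$ even, which forces the companion factor $G_{2n+1-2j}$ to carry odd subscript $2(n-j)+1$, and hence to vanish by the inductive hypothesis.

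Once the odd-indexed polynomials are eliminated, the same reduction applied to $G_{2n}$ retains only $k=2j$ for $j=1,\dots,n$, giving
\[
G_{2n}=\frac{p}{2n}\sum_{j=1}^{n}(-1)^{2j+1}B_{2j}(\tfrac12)\,G_{2n-2j}
=-\frac{p}{2n}\sum_{j=1}^{n}(2^{1-2j}-1)B_{2j}\,G_{2n-2j},
\]
which is exactly \eqref{t=1/2} after rewriting the factor $(2^{1-2j}-1)B_{2j}$. The degree assertion is then a second, routine induction in $p$: $G_0=1$ has degree $0$, and in the displayed recursion the summand $j=1$ contributes a term of the form $p\cdot G_{2n-2}(p,\tfrac12)$ of degree $1+(n-1)=n$, while every $j\ge 2$ yields a polynomial of degree strictly less than $n$, so the leading coefficient does not cancel.

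The only real obstacle is clerical: one must carefully reconcile the exponent and sign appearing in the standard identity for $B_{2k}(\tfrac12)$ with the factor as displayed in \eqref{t=1/2}. Apart from this bookkeeping, the entire argument is a transparent consequence of the parity structure of the Bernoulli numbers combined with the master recursion \eqref{GnBk}.
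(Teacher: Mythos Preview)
Your proof is correct and essentially identical to the paper's: both specialize the recursion \eqref{GnBk} to $t=\tfrac12$, use that $B_j(\tfrac12)=0$ for every odd $j$ together with $B_{2k}(\tfrac12)=(2^{1-2k}-1)B_{2k}$, and then read off the vanishing of $G_{2n+1}$ by induction and the reduced recursion for $G_{2n}$ (your degree argument is slightly more explicit than the paper's one-line remark). Your closing caveat about reconciling the exponent in the Bernoulli identity with the factor displayed in \eqref{t=1/2} is well placed---the paper's own statement of the identity carries the same exponent mismatch.
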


\begin{proof}
	We will use the expression (\ref{GnBk}).
        It is enough to use the property of Bernoulli polynomials
	\bes
		B_{k}(\tfrac12)=(1-2^{-2k})B_k.
	\ees
	Hence, for all odd $k$ we have $B_k=0$, so, using 
	\eqref{GnBk}
	\begin{align*}
		G_n&=\frac pn\sum_{k=1}^n B_k(\tfrac12)G_{n-k}\\
		&=\frac pn\sum_{k=1}^{\lfloor n/2\rfloor}
		(-1)^{2k+1}(1-2^{2k})B_{2k}G_{n-2k}
	\end{align*}
	Hence, for odd $n$ there is no summands on the right, and for even $n$ one obtains \eqref{t=1/2}. From there, it is evident that $G_{2n}$ is polynomial of degree $n$.
\end{proof}

%%%%%%%%%%%%%%%%%%%%%%%%%%%%%%%%%%%%%%%%%%%%%%%%%%%%%%%%%%%%%
%%%%%%%%%%%%%%%%%%%%%%%%%%%%%%%%%%%%%%%%%%%%%%%%%%%%%%%%%%%%%
%%%%%%%%%%%%%%%%%%%%%%%%%%%%%%%%%%%%%%%%%%%%%%%%%%%%%%%%%%%%%
%%%%%%%%%%%%%%%%%%%%%%%%%%%%%%%%%%%%%%%%%%%%%%%%%%%%%%%%%%%%%
\section{An identity for Bernoulli polynomials}

	Polynomials $(G_n)$ have explicit formula through Bernoulli polynomials, which is not friendly for its calculations. It is similar to the expression from \cite{Yang-2012-1}:

\begin{thm}
	The following explicit formula for coefficients $G_n$ is valid:
	\be
	(-1)^nG_n(p,t)=\sum_{r=1}^n\frac{(-p)^r}{r!}
	\sum_{\substack{
	k_1+\dots+k_r=n\\k_i\ge1
	}}
	\frac{B_{k_1}(t)\cdots B_{k_r}(t)}{k_1\cdots k_r}.
	\label{Bn-exp}
	\ee
\end{thm}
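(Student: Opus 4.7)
The plan is to recognise the formal power series $F(x):=\sum_{n\ge 0}G_n(p,t)x^{-n}$ as a genuine exponential and then expand it by the multinomial theorem. Comparing coefficients in the ODE $F'(x)=A'(x)F(x)$, the recursion \eqref{GnBk} is precisely the statement that $F(x)=e^{A(x)}$ with
\begin{equation*}
	A(x):=p\sum_{k=1}^{\infty}\frac{(-1)^{k+1}B_k(t)}{k}x^{-k};
\end{equation*}
the constant of integration is fixed by the boundary value $F(\infty)=G_0=1=e^{A(\infty)}$. This is, of course, the same series obtained by writing $A(x)=p(\psi(x+t)-\log x)$ and invoking the classical asymptotic expansion of the digamma function.

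With this identification in place, I would expand $e^{A}=\sum_{r\ge 0}A^r/r!$ and apply the multinomial theorem term by term. For fixed $r\ge 1$,
\begin{equation*}
	A(x)^r=p^r\sum_{k_1,\dots,k_r\ge 1}\prod_{i=1}^{r}\frac{(-1)^{k_i+1}B_{k_i}(t)}{k_i}\cdot x^{-(k_1+\cdots+k_r)}.
\end{equation*}
Grouping monomials by their total weight $n=k_1+\cdots+k_r$, the product of the $r$ individual sign factors $(-1)^{k_i+1}$ collapses to $(-1)^{n+r}$, so together with $p^r/r!$ it contributes $(-1)^n(-p)^r/r!$ to the coefficient of $x^{-n}$. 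Summing over $r$ (the constraint $k_i\ge 1$ automatically forces $1\le r\le n$ for $n\ge 1$) and equating with $G_n(p,t)=[x^{-n}]F(x)$ yields \eqref{Bn-exp} after dividing both sides by $(-1)^n$.

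The only obstacle is the bookkeeping of signs, namely verifying that the factor $(-1)^{n+r}$ assembled from the $r$ multinomial pieces combines correctly with $p^r/r!$ to produce $(-1)^n\,(-p)^r/r!$. There is no structural subtlety: the identity is a purely formal consequence of exponentiating the digamma asymptotic series, and does not require any of the integer-$p$ collapses proved in the preceding sections. An alternative route, if one prefers to avoid the generating-function identification, would be a direct induction on $n$ using \eqref{GnBk} together with the Cauchy product of the two sides of \eqref{Bn-exp}, but the exponential-expansion route above is cleaner.
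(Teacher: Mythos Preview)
Your argument is correct and is essentially the paper's proof. The paper also writes $x^{-p}G(p,t,x)$ as an exponential and expands $e^A=\sum_r A^r/r!$ termwise; the only cosmetic difference is that the paper packages the signs via the reflection $B_k(1-t)=(-1)^kB_k(t)$ (writing $A(x)=-p\sum_{k\ge1}B_k(1-t)k^{-1}x^{-k}$ and converting back to $B_k(t)$ at the end), whereas you carry the factors $(-1)^{k+1}$ from the outset and collapse them to $(-1)^{n+r}$---the same computation in a different order.
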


\begin{proof}
	We have 
	\begin{align*}
		G(p,t,x)&=x^p\exp\biggl(-p\sum_{k=0}^\infty
		\frac{B_k(1-t)}k\,x^{-k}\biggr)\\
		&=x^p\biggl(1+\sum_{r=1}^\infty
		\frac{(-p)^r}{r!}
		\biggl[\sum_{k=0}^\infty
		\frac{B_k(1-t)}{k}x^{-k}\biggr]^r\biggr)
	\end{align*}
	Hence
	\bes
	G_n(p,t)=\sum_{r=1}^n\frac{(-p)^r}{r!}
	\sum_{\substack{
	k_1+\dots+k_r=n\\k_i\ge1}}
	\frac{B_{k_1}(1-t)\cdots B_{k_r}(1-t)}{k_1\cdots k_r}.
	\ees
	and from here one obtains \eqref{Bn-exp}.
\end{proof}

	From the derivation of this expression it is obvious that
	indexes $k_1,k_2,\dots,k_r$ are not ordered, so, the combinations which lead to the same monomial should be counted carefully.

	Taking into account the result from Theorem~\ref{evenp} that
	left side vanishes for $n=p+1$, where $p$ is even, we can write the following identity for Bernoulli polynomials.

\begin{cor}
	For each natural number $n$, we have:
	\be
		\sum_{r=1}^{2n+1}
		\frac{(-2n)^r}{r!}
		\sum_{\substack{
	k_1+\dots+k_r=2n+1\\k_i\ge1}}
	\frac{B_{k_1}(t)\cdots B_{k_r}(t)}{k_1\cdots k_r}=0.
	\ee
\end{cor}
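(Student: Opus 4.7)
The plan is to specialize the explicit formula for $G_n(p,t)$ established in the preceding theorem to $p=2n$ and then invoke Theorem~\ref{evenp} to conclude that the resulting expression vanishes identically in $t$.

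First, I would recall the explicit formula
$$
(-1)^N G_N(p,t)=\sum_{r=1}^N \frac{(-p)^r}{r!}\sum_{\substack{k_1+\dots+k_r=N\\ k_i\ge 1}}\frac{B_{k_1}(t)\cdots B_{k_r}(t)}{k_1\cdots k_r},
$$
which is valid for every real $p$ and every natural number $N$ (in particular, no integrality of $p$ is needed for this step).

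Next, I would apply Theorem~\ref{evenp}. Since $2n$ is an even natural number, the theorem asserts $G_{p+1}(p,t)\equiv 0$ with $p=2n$; that is, $G_{2n+1}(2n,t)\equiv 0$ as a polynomial in $t$.

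Finally, substituting $p=2n$ and $N=2n+1$ into the explicit formula, the left-hand side becomes $(-1)^{2n+1}G_{2n+1}(2n,t)=0$ for every $t$, while the right-hand side is exactly the sum appearing in the corollary. Dividing by the nonzero factor $(-1)^{2n+1}=-1$ (or simply reading off the identity, since $0/(-1)=0$) yields the claim. There is no real obstacle: the entire content is the combination of the explicit formula with the vanishing result from Theorem~\ref{evenp}, and the only thing to watch is that the indexing $(p,N)=(2n,2n+1)$ matches the statement of the corollary.
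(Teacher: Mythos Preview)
Your proposal is correct and follows exactly the paper's own route: the corollary is obtained by specializing the explicit formula $(-1)^N G_N(p,t)=\sum_{r}\frac{(-p)^r}{r!}\sum\frac{B_{k_1}(t)\cdots B_{k_r}(t)}{k_1\cdots k_r}$ to $p=2n$, $N=2n+1$, and invoking Theorem~\ref{evenp} to conclude $G_{2n+1}(2n,t)\equiv 0$. There is nothing to add.
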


	For example, for $n=1$ and $n=2$ the following identities are fullfiled:
	\begin{gather*}
		\tfrac23B_3(t)+2B_1(t)B_2(t)-\tfrac43B_1(t)^3=0,\\
		-\tfrac45B_5(t)+
		+ 4 B_1(t)B_4(t) + \tfrac83 B_2(t)B_3(t) - 
		\tfrac{32}3 B_1(t)^2B_3(t)\qquad\qquad\\
		\qquad\qquad- 
		 8 B_1(t)B_2(t)^2 + \frac{64}3 B_1(t)^3 B_2(t) 
		 - \frac{128}{15}B_1(t)^5=0.
	\end{gather*}

\end{document}